 \newtheorem{thm}{Theorem}[section]
 \theoremstyle{definition}
 \theoremstyle{remark}
 \numberwithin{equation}{section}
\begin{document}

\title[Dual complex Pell quaternions]{
\\ \\ 
Dual complex Pell quaternions}

\author[F\"{u}gen Torunbalc{\i} Ayd{\i}n]{F\"{u}gen Torunbalc{\i}  Ayd{\i}n*}
\address{%
Yildiz Technical University\\
Faculty of Chemical and Metallurgical Engineering\\
Department of Mathematical Engineering\\
Davutpasa Campus, 34220\\
Esenler, Istanbul,  TURKEY}

\email{faydin@yildiz.edu.tr ; ftorunay@gmail.com}

\thanks{*Corresponding Author}
\begingroup
    \renewcommand{\thefootnote}{}
    \footnotetext{%
      2010 AMS Subject Classification:
		   11B37, 20G20, 11R52.
	    } 
    \endgroup

\keywords{ Dual number; dual complex number; Pell number; dual complex Pell number; Pell quaternion; dual complex Pell quaternion. }

\begin{abstract}

In this paper, dual complex Pell numbers and quaternions are defined. Also, some algebraic properties of dual-complex Pell numbers and quaternions which are connected with dual complex numbers and Pell numbers are investigated. Furthermore, the Honsberger identity, Binet's formula, Cassini's identity, Catalan's identity for these quaternions are given. 
\end{abstract} 

\maketitle

\section{Introduction}

The real quaternions were first described by Irish mathematician William Rowan Hamilton in 1843. Hamilton \cite{A} introduced a set of real quaternions which can be represented as  
\begin{equation}\label{1}
H=\{\, q={q}_{0}+i\,{q}_{1}+j\,{q}_{2}+k\,{q}_{3}\,  |\, {q}_{0},\,{q}_{1},\,{q}_{2},\,{q}_{3}\in\mathbb{R}\,\} 
\end{equation}
where
\begin{equation*}
{i}^{2}={j}^{2}={k}^{2}=-1\,,\ \ i\ j=-j\ i=k\,,\quad j\ k=-k \ j=i\,,\quad k\ i=-i\ k=j\,.
\end{equation*}
\par The real quaternions constitute an extension of complex numbers into a four-dimensional space and can be considered as four-dimensional vectors, in the same way that complex numbers are considered as two-dimensional vectors.
Horadam \cite{B} defined complex Fibonacci and Lucas quaternions as follows
\begin{equation*}
Q_n=F_n+F_{n+1}\,i+F_{n+2}\,j+F_{n+3}\,k 
\end{equation*}
and
\begin{equation*}
K_n=L_n+L_{n+1}\,i+L_{n+2}\,j+L_{n+3}\,k
\end{equation*}
where $F_n$ and $L_n$ denote the $nth$ Fibonacci and Lucas numbers, respectively. Also, the imaginary quaternion units $i,\,j,\,\,k$ have the following rules
\begin{equation*}
i^2=j^2=k^2=-1\,,\ \ i\,j=-j\,i=k\,,\quad j\,k=-k \,j=i\,,\quad k\,i=-i\,k=j
\end{equation*}
The studies that follows is based on the work of Horadam \cite{C}-\cite{H}. 

In 1971,  Horadam studied  on the Pell and Pell-Lucas sequences and  he gave Cassini-like formula as follows \cite{I}:   
\begin{equation}\label{2}
{P}_{n+1}{P}_{n-1}-{P}_{n}^2=(-1)^n
\end{equation}
and Pell identities
\begin{equation}\label{3}
\left\{\begin{array}{l}
P_{r}\,P_{n+1}+P_{r-1}\,P_{n}=P_{n+r},\\
P_{n}(P_{n+1}+P_{n-1})=P_{2n},\\
P_{2n+1}+P_{2n}=2\,P_{n+1}^2-2\,P_{n}^2-(-1)^n,\\
P_{n}^2+P_{n+1}^2=P_{2n+1},\\
P_{n}^2+P_{n+3}^2=5(P_{n+1}^2+P_{n+2}^2),\\
P_{n+a}\,P_{n+b}-P_{n}\,P_{n+a+b}=(-1)^n\,P_{n}\,P_{n+a+b},\\
P_{-n}=(-1)^{n+1}\,P_{n}.
\end{array}\right.
\end{equation} 
and in 1985, Horadam and Mahon obtained Cassini-like formula as follows \cite{J}
\begin{equation}\label{4}
q_{n+1}\,q_{n-1}-q_{n}^2=8\,(-1)^{n+1}.
\end{equation} 
First the idea to consider Pell quaternions it was suggested by Horadam in paper \cite{K}.\\
\\
In 2016, \c{C}imen and \.{I}pek  introduced the Pell quaternions and the Pell-Lucas quaternions and gived properties of them \cite{L} as follows:
\begin{equation}\label{5}
{Q}{P_{n}}=\{{QP_{n}}=P_{n}\,e_0+P_{n+1}\,e_1+P_{n+2}\,e_2+P_{n+3}\,e_3 \, | \, P_{n}\,\,n\text{-}th\, Pell\, number\}
\end{equation}
where
\begin{equation*}
\begin{array}{rl}
{e_1}^2={e_2}^2={e_3}^2=-1,\,\,e_1\,e_2=-e_2\,e_1=e_3,\,\,e_2\,e_3=-e_3\,e_2=e_1,\, \\
e_3\,e_1=-e_1\,e_3=e_2.
\end{array}
\end{equation*}
In 2016, Anetta and Iwona introduced the Pell quaternions and the Pell octanions \cite{M} as follows:
\begin{equation}\label{6}
{R_{n}}={P}_{n}\,+ i\,{P}_{n+1}+ j\,{P}_{n+2}\,+\,k\,{P}_{n+3} 
\end{equation}
where
\begin{equation*}
{i}^{2}={j}^{2}={{k}^{2}}=i\,j\,k=-1\,,\ \ i\,j=-j\,i=k,\,j\,k=-k\,j=i,\,k\,i=-i\,k=j\,.
\end{equation*}
\\
In 2016, Torunbalc{\i} Ayd{\i}n and Y\"{u}ce introduced the dual Pell quaternions \cite{N} as follows:
\begin{equation}\label{7}
\begin{aligned}
{P_{D}}=\{{D}^P_{n}=&{{P}_{n}}+i\,{P_{n+1}}+j\,{P_{n+2}}+k\,{P_{n+3}}\,| \,\, {P}_{n}\,\,n\text{-}th\,\text{Pell number}\},
\end{aligned}
\end{equation}
where
\begin{equation*}
{i}^{2}={j}^{2}={k}^{2}=i\,j\,k=0\,,\ \ i\,j=\,-j\,i=\,j\,k=\,-k\,j=\,k\,i=\,-i\,k=0\,.
\end{equation*}
In 2017, Torunbalc{\i} Ayd{\i}n, K\"{o}kl\"{u} and Y\"{u}ce introduced generalized dual Pell quaternions \cite{O} as follows:
\begin{equation}\label{8}
\begin{aligned}
{Q}_{\mathbb{D}}=\{\bold{\mathbb{D}^P}_{n}&={\mathbb{P}_{n}}+i\,{\mathbb{P}_{n+1}}+j\,\,{\mathbb{P}_{n+2}}+k\,{\mathbb{P}_{n+3}}\,\left.\right| {\mathbb{P}_{n}},\, n\text{-}th\,\text{Gen. Pell number}\},
\end{aligned}
\end{equation}
where
\begin{equation*}
{i}^{2}={j}^{2}={k}^{2}=i\,j\,k=0\,,\ \ i\,j=-j\,i=j\,k=-k\,j=k\,i=-i\,k=0.
\end{equation*} 
Furthermore, Torunbalc{\i} Ayd{\i}n, K\"{o}kl\"{u} introduced the generalizations of the Pell sequence in 2017 \cite{P} as follows:
\begin{equation}\label{9}
\left\{\begin{array}{rl}
{\mathbb{P}_{0}}=&{{q}},\,{\mathbb{P}_{1}}=p,\,{\mathbb{P}_{2}}=2p+q, \,\, {p}\,{q}  \in{\mathbb Z} \\
{\mathbb{P}_{n}}=&2{\mathbb{P}_{n-1}}+\,{\mathbb{P}_{n-2}},\,\ n\geq 2 \\
or \\
{\mathbb{P}_n}=&(p-2q)P_{n}+q\,P_{n+1}=p\,P_{n}+q\,P_{n-1}
\end{array}\right.
\end{equation} 
In 2017, Toke{\c{s}}er, {\"U}nal and Bilgici, introduced split Pell quaternions \cite{Q} as follows: 
\begin{equation}\label{10}
{SP_n}={P}_{n}+i\,{P}_{n+1}+j\,{P}_{n+2}+k\,{P}_{n+3}.
\end{equation} 
where
\begin{equation*}
{i}^{2}=-1,\,\,{j}^{2}={k}^{2}=1,\,\,i\ j=-j\,i=k,\,\, j\,k=-j\,k=-i,\, k\,i=-i\,k=j.
\end{equation*} 
In 2017, Catarino and Vasco introduced dual k-Pell quaternions and Octonions \cite{R} as follows:
\begin{equation}\label{11}
\widetilde{{R}_{k,n}}=\widetilde{{P}_{k,n}}\,{e_0}+\widetilde{{P}_{k,n+1}}\,{e_1}+\widetilde{{P}_{k,n+2}}\,{e_2}+\widetilde{{P}_{k,n+3}}\,{e_3},
\end{equation} 
where
${\widetilde{{P_{k,n}}}=P_{k,n}+\varepsilon\,P_{k,n+1}}$,\,\, $P_{k,n}=2\,P_{k,n-1}+k\,P_{k,n-2}$,\,\, \,$n\ge2$
\begin{equation*}
{e_0}=1,\,{e_i}^{2}=-1,\,e_i\,e_j=-e_j\,e_i,\,\,i,\,j=1,\,2,\,3,
\end{equation*}
\begin{equation*}
\varepsilon\,\ne0,\,\,0\,\varepsilon=\varepsilon\,0=0,\,\,1\,\varepsilon=\varepsilon\,1=\varepsilon,\,\,\varepsilon^2=0.
\end{equation*} 
In 2018, Torunbalc{\i} Ayd{\i}n introduced bicomplex Pell and Pell-Lucas numbers \cite{S} as follows: 
\begin{equation}\label{12}
{BP_n}={P}_{n}+i\,{P}_{n+1}+j\,{P}_{n+2}+i\,j\,{P}_{n+3}
\end{equation}
and
\begin{equation}\label{13}
{BPL_n}={Q}_{n}+i\,{Q}_{n+1}+j\,{Q}_{n+2}+i\,j\,{Q}_{n+3}
\end{equation} 
where
${i}^{2}=-1,\,\,{j}^{2}=-1,\,\,i\ j=j\,i$.
\par In the 19 th century Clifford invented a new number system by using the notation $(\varepsilon)^2=0,\,\varepsilon \neq 0$. This number system was called dual number system and the dual numbers were represented in the form $A=a+\varepsilon\,a^*$ with $a,\,a^*\in \mathbb R$ \,\cite{T}. Afterwards, Kotelnikov (1895)  and Study (1903) generalized first applications of dual numbers to mechanics \cite{U}, \cite{V}. Besides mechanics, this concept has lots of applications in different areas such as algebraic geometry, kinematics, quaternionic formulation of motion in the theory of relativity. Majernik has introduced the multi-component number system \cite{W}. There are three types of the four-component number systems which have been constructed by joining the complex, binary and dual two-component numbers. Later, Farid Messelmi has defined the algebraic properties of the dual-complex numbers in the light of this study \cite{X}. There are many applications for  the theory of dual-complex numbers. In 2017, G\"{u}ngor and Azak have defined dual-complex Fibonacci and dual-complex Lucas numbers and their properties \cite{Y}.
Dual-complex numbers \cite{X} $w$ can be expressed in the form as 
\begin{equation}\label{14}
\mathbb{DC}=\{\, w={z}_{1}+\varepsilon {z}_{2} \, |\, {z}_{1},\,{z}_{2}\,\in\mathbb{C} \,\, \text{where} \, \, \, \varepsilon^2=0,\, \varepsilon \neq 0 \}.
\end{equation}
Here if $z_1=x_1+i\,x_2$ and $z_2=y_1+i\,y_2$, then any dual-complex number can be written 
\begin{equation}\label{15}
w=x_1+ix_2+\varepsilon\,y_1+i\,\varepsilon \,y_2
\end{equation}
\begin{equation*}
i^2=-1,\,\,\varepsilon\neq 0,\,\,\varepsilon^2=0,\,\,\,(i\,\varepsilon)^2=0.  
\end{equation*}
Addition, substraction and multiplication of any two dual-complex numbers $w_1$ and $w_2$ are defined by
\begin{equation}\label{16}
\begin{aligned}
w_1\pm w_2=&(z_1+\varepsilon z_2)\pm (z_3+\varepsilon z_4)=(z_1\pm z_3)+\varepsilon(z_2\pm z_4), \\
w_1\times\,w_2=&(z_1+\varepsilon z_2)\times\,(z_3+\varepsilon z_4)=z_1\,z_3+\varepsilon\,(z_2\,z_4+z_2\,z_3).
\end{aligned}
\end{equation}
On the other hand, the division of two dual-complex numbers are given by
\begin{equation}\label{17}
\begin{aligned}
&\frac{w_1}{w_2}=\frac{z_1+\varepsilon z_2}{z_3+\varepsilon z_4} \\
&\frac{(z_1+\varepsilon z_2)(z_3-\varepsilon z_4)}{(z_3+\varepsilon z_4)(z_3-\varepsilon z_4)}=
\frac{z_1}{z_3}+\varepsilon \,\frac{z_2\,z_3-z_1\,z_4}{z_3^2}.
\end{aligned}
\end{equation}
If $Re(w_2)\neq 0$,then the division $\frac{w_1}{w_2}$ is possible. The dual-complex numbers are defined by the basis $\{1,i,\varepsilon,i\,\varepsilon \}$. Therefore, dual-complex numbers, just like quaternions, are a generalization of complex numbers by means of entities specified by four-component numbers. But real and dual quaternions are non commutative, whereas, dual-complex numbers are commutative. The real and dual quaternions form a division algebra, but dual-complex numbers form a commutative ring with characteristics $0$. Moreover, the multiplication of these numbers gives the dual-complex numbers the structure of 2-dimensional complex Clifford Algebra and 4-dimensional real Clifford Algebra.  
The base elements of the dual-complex numbers satisfy the following commutative multiplication scheme (Table 1).
\begin{table}[]
\centering
\caption{Multiplication scheme of dual-complex numbers}
\begin{tabular}{c rrrr}
\hline
 $x$&  $1$&  $i$& $\varepsilon$& $i\,\varepsilon$\cr  
\hline
 $1$&  $1$&  $i$&  $\varepsilon$& $i\,\varepsilon$\cr  
 $i$&  $i$&  $-1$&  $i\,\varepsilon$& $-\varepsilon$\cr 
$\varepsilon$&  $\varepsilon$&  $i\,\varepsilon$&  $0$& $0$\cr
$i\,\varepsilon$& $i\,\varepsilon$& $-\varepsilon$& $0$& $0$\cr 
\hline 
\end{tabular}
\end{table}
Five different conjugations can operate on dual-complex numbers \cite{X} as follows: 
\begin{equation}\label{18}
\begin{aligned}
w=&x_1+ix_2+\varepsilon\,y_1+i\,\varepsilon y_2,\cr 
{w}^{*_1}=&(x_1-ix_2)+\varepsilon (y_1-i\,y_2)=(z_1)^*+\varepsilon\,(z_2)^*,\cr
{w}^{*_2}=&(x_1+i\,x_2)-\varepsilon \,(y_1+i\,y_2)=z_1-\varepsilon\,z_2,\cr
{w}^{*_3}=&(x_1-i\,x_2)-\varepsilon \,(y_1-i\,y_2)=z_1^*-\varepsilon\,z_2^*, \cr
{w}^{*_4}=&(x_1-i\,x_2)(1-\varepsilon \,\frac{y_1+i\,y_2 }{x_1+ix_2} \,)=(z_1)^*(1-\varepsilon\frac{z_2}{z_1}), \cr
{w}^{*_5}=&(y_1+i\,y_2)-\varepsilon (x_1+i\,x_2)=z_2-\varepsilon z_1. 
\end{aligned}
\end{equation}
Therefore, the norm of the dual-complex numbers is defined as 
\begin{equation}\label{19}
\begin{aligned}
{{N}_{w}^{*_1}}=&\left\| {w\times{w}^{*_1}} \right\|=\sqrt{\left|{z}_{1}\right|^2+2\,\varepsilon Re({z}_{1}\,{z_2}^*)}, \\ 
{{N}_{w}^{*_2}}=&\left\| {w\times{w}^{*_2}} \right\|=\sqrt{{z}_{1}^2}, \\ 
{{N}_{w}^{*_3}}=&\left\| {w\times{w}^{*_3}} \right\|=\sqrt{\left|{z}_{1}\right|^2-2\,i\,\varepsilon Im({z}_{1}\,{z_2}^*)},\\
{{N}_{w}^{*_4}}=&\left\| {w\times{w}^{*_4}} \right\|=\sqrt{\left|{z}_{1}\right|^2},\\
{{N}_{w}^{*_5}}=&\left\| {w\times{w}^{*_5}} \right\|=\sqrt{{z}_{1}\,{z}_{2}+\varepsilon ({z}_{2}^2-{z_1}^2)}. 
\end{aligned}
\end{equation} 
\par In this paper, the dual-complex Pell numbers and quaternions will be defined. The aim of this work is to present in a unified manner a variety of algebraic properties of the dual-complex Pell quaternions as well as both the dual-complex numbers and dual-complex Pell numbers. In particular, using five types of conjugations, all the properties established for dual-complex numbers and dual-complex Pell numbers are also given for the dual-complex Pell quaternions. In addition, the Honsberger identity, the d'Ocagne's identity, Binet's formula, Cassini's identity, Catalan's identity for these quaternions are given.

\section{The dual-complex Pell numbers}
The dual-complex Pell and Pell-Lucas numbers can be defined by the basis $\{1,\,i,\,\varepsilon,\,i\,\varepsilon\,\}$, where $i$,\,$j$ \,and\, $i\,j$ satisfy the conditions 
\begin{equation*}
i^2=-1,\,\,\varepsilon\neq 0,\,\,\varepsilon^2=0,\,\,\,(i\,\varepsilon)^2=0.  
\end{equation*}
as follows
\begin{equation}\label{20}
\begin{aligned}
{\mathbb{DC}P_n}=&({P}_{n}+i\,{P}_{n+1})+\varepsilon \,({P}_{n+2}+i\,{P}_{n+3}) \\
=& {P}_{n}+i\,{P}_{n+1}+\varepsilon \,{P}_{n+2}+i\,\varepsilon\,{P}_{n+3}
\end{aligned}
\end{equation}
and
\begin{equation}\label{21}
\begin{aligned}
{\mathbb{DC}PL_n}=&({PL}_{n}+i\,{PL}_{n+1})+\varepsilon \,({PL}_{n+2}+i\,{PL}_{n+3}) \\
=& {PL}_{n}+i\,{PL}_{n+1}+\varepsilon \,{PL}_{n+2}+i\,\varepsilon\,{PL}_{n+3}.
\end{aligned}
\end{equation}
With the addition, substraction and multiplication by real scalars of two dual-complex Pell numbers, the dual-complex Pell number can be obtained again. 
Then, the addition and subtraction of the dual-complex Pell numbers are defined by 
\begin{equation}\label{22}
\begin{array}{rl}
{\mathbb{DC}P_n}\pm{\mathbb{DC}P_m}=&({P}_{n}\pm{P}_{m})+i\,({P}_{n+1}\pm{P}_{m+1})+\varepsilon \,({P}_{n+2}\pm{P}_{m+2}) \\
&+i\,\varepsilon\,({P}_{n+3}\pm{P}_{m+3}). 
\end{array}
\end{equation}
The multiplication of a dual-complex Pell number by the real scalar $\lambda$ is defined as 
\begin{equation}\label{23}
{\lambda}\,{\mathbb{DC}P_n}=\lambda\,P_n+i\,\lambda\,{P}_{n+1}+\varepsilon\,\lambda\,{P}_{n+2}+i\,\varepsilon\,\lambda\,\,{P}_{n+3}.
\end{equation} 
By using (Table 1) the multiplication of two dual-complex Pell numbers is defined by
\begin{equation}\label{24}
\begin{array}{rl}
{\mathbb{DC}P_n}\times\,{\mathbb{DC}P_m}=&({P}_{n}\,{P}_{m}-{P}_{n+1}\,{P}_{m+1})+i\,({P}_{n+1}\,{P}_{m}+{P}_{n}\,{P}_{m+1}) \\
&+\varepsilon \,({P}_{n}\,{P}_{m+2}-{P}_{n+1}\,{P}_{m+3}+{P}_{n+2}\,{P}_{m}-{P}_{n+3}\,{P}_{m+1}) \\
&+i\,\varepsilon\,({P}_{n+1}\,{P}_{m+2}+{P}_{n}\,{P}_{m+3}+{P}_{n+3}\,{P}_{m}+{P}_{n+2}\,{P}_{m+1}) \\
=&{\mathbb{DC}P_m}\times\,{\mathbb{DC}P_n}.
\end{array}
\end{equation} 
Also, there exits five conjugations as follows:
\begin{equation}\label{25}
\begin{aligned}
\mathbb{DC}{P}_{n}^{*_1}=&{P}_{n}-i\,{P}_{n+1}+\varepsilon\,{P}_{n+2}-i\,\varepsilon\,{P}_{n+3},\quad \text{complex-conjugation} 
\end{aligned}
\end{equation}
\begin{equation}\label{26}
\begin{aligned}
\mathbb{DC}{P}_{n}^{*_2}=&{P}_{n}+i\,{P}_{n+1}-\varepsilon\,{P}_{n+2}-i\,\varepsilon\,{P}_{n+3},\quad \text{dual-conjugation}  
\end{aligned}
\end{equation}
\begin{equation}\label{27}
\begin{aligned}
\mathbb{DC}{P}_{n}^{*_3}=&{P}_{n}-i\,{P}_{n+1}-\varepsilon\,{P}_{n+2}+i\,\varepsilon\,{P}_{n+3},\quad \text{coupled-conjugation} 
\end{aligned}
\end{equation}
\begin{equation}\label{28}
\begin{aligned}
\mathbb{DC}{P}_{n}^{*_4}=&({P}_{n}-i\,{P}_{n+1})\,.\,\varepsilon\,(1-\frac{{P}_{n+2}+i\,{P}_{n+3}}{{P}_{n}+i\,{P}_{n+1}}), \quad \text{dual-complex-conjugation} 
\end{aligned}
\end{equation}
\begin{equation}\label{29}
\begin{aligned}
\mathbb{DC}{P}_{n}^{*_5}=&{P}_{n+2}+i\,{P}_{n+3}-\varepsilon\,{P}_{n}-i\,\varepsilon\,{P}_{n+1}, \quad \text{anti-dual-conjugation}.
\end{aligned}
\end{equation}
\\
In this case, we can give the following relations:
\begin{equation}\label{30}
\mathbb{DC}{P}_{n}\,(\mathbb{DC}{P}_{n})^{*_1}=P_{2n+1}+2\,\varepsilon{P}_{2n+3}, 
\end{equation}
\begin{equation}\label{31}
\begin{aligned}
\mathbb{DC}{P}_{n}\,(\mathbb{DC}{P}_{n})^{*_2}=-q_{n}\,q_{n+1}+2\,i\,{P}_{n}\,P_{n+1},\quad {q_n} \quad \text{n-th}\, \text{mod.\,Pell\,number} 
\end{aligned}
\end{equation}
\begin{equation}\label{32}
\mathbb{DC}{P}_{n}\,(\mathbb{DC}{P}_{n})^{*_3}=P_{2n+1}-4\,\,i\,\varepsilon\,(-1)^{n}, 
\end{equation}
\begin{equation}\label{33}
\mathbb{DC}{P}_{n}\,(\mathbb{DC}{P}_{n})^{*_4}=P_{2n+1}, \\
\end{equation}
\begin{equation}\label{34}
\mathbb{DC}{P}_{n}+\,(\mathbb{DC}{P}_{n})^{*_1}=2\,(P_{n}+\,\varepsilon{P}_{n+2}),
\end{equation}
\begin{equation}\label{35}
\mathbb{DC}{P}_{n}+\,(\mathbb{DC}{P}_{n})^{*_2}=2\,(P_{n}+i\,{P}_{n+1}), 
\end{equation}
\begin{equation}\label{36}
\begin{aligned}
\mathbb{DC}{P}_{n}+\,(\mathbb{DC}{P}_{n})^{*_3}=2\,(P_{n}+i\,\varepsilon{P}_{n+3}),
\end{aligned}
\end{equation}
\begin{equation}\label{37}
\begin{aligned}
({P}_{n}+i\,{P}_{n+1})\,(\mathbb{DC}{P}_{n})^{*_4}=&(P_{2n+1}-\varepsilon{P}_{2n+3}+2\,i\,\varepsilon (-1)^n) \\
=&({P}_{n}-i\,{P}_{n+1})\,(\mathbb{DC}{P}_{n})^{*_2}, 
\end{aligned}
\end{equation}
\begin{equation}\label{38}
\begin{aligned}
\varepsilon\,\mathbb{DC}{P}_{n}+\,(\mathbb{DC}{P}_{n})^{*_5}=P_{n+2}+i\,{P}_{n+3},
\end{aligned}
\end{equation}
\begin{equation}\label{39}
\begin{aligned}
\mathbb{DC}{P}_{n}-\varepsilon\,(\mathbb{DC}{P}_{n})^{*_5}=P_{n}+i\,{P}_{n+1}.
\end{aligned}
\end{equation}
The norm of the dual-complex Pell numbers ${\,\mathbb{DC}{P}_{n}}$ is defined in five different ways as follows
\begin{equation}\label{40}
\begin{array}{rl}
{N}_{\mathbb{DC}{P}_{n}^{*_1}}=&\|\mathbb{DC}{P}_{n}\times\,(\mathbb{DC}{P}_{n})^{*_1}\|^2 \\
=&({P}_{n}^2+{{P}_{n+1}^2})+2\,\varepsilon(\,{P}_{n}\,{P}_{n+2}+{P}_{n+1}\,{P}_{n+3}) \\
=&{P}_{2n+1}+2\,\varepsilon{P}_{2n+3}, 
\end{array} 
\end{equation}
\begin{equation}\label{41}
\begin{array}{rl}
{N}_{\mathbb{DC}{P}_{n}^{*_2}}=&\|\mathbb{DC}{P}_{n}\times\,(\mathbb{DC}{P}_{n})^{*_2}\|^2 \\
=&|({P}_{n}^2-{P}_{n+1}^2)+2\,i\,{P}_{n}\,{P}_{n+1}\,| \\
=&|-{q}_{n}\,{q}_{n+1}+2\,i\,{P}_{n}\,{P}_{n+1}\,|,
\end{array} 
\end{equation}
\begin{equation}\label{42}
\begin{array}{rl}
{N}_{\mathbb{DC}{P}_{n}^{*_3}}=&\|\mathbb{DC}{P}_{n}\times\,(\mathbb{DC}{P}_{n})^{*_3}\|^2 \\
=&({P}_{n}^2+{P}_{n+1}^2)+2\,i\,\varepsilon({P}_{n}\,{P}_{n+3}-{P}_{n+1}\,{P}_{n+2}) \\
=&{P}_{2n+1}-4\,i\,\varepsilon (-1)^{n}.
\end{array}  
\end{equation}
\begin{equation}\label{43}
\begin{array}{rl}
{N}_{\mathbb{DC}{P}_{n}^{*_4}}=&\|\mathbb{DC}{P}_{n}\times\,(\mathbb{DC}{P}_{n})^{*_4}\|^2 \\
=&{P}_{n}^2+{P}_{n+1}^2={P}_{2n+1}.
\end{array}  
\end{equation}
\section{The dual-complex Pell and Pell-Lucas quaternions}
In this section, firstly the dual-complex Pell quaternions will be defined. The dual-complex Pell quaternions and the dual-complex Pell-Lucas quaternions are defined by using the dual-complex Pell numbers and the dual-complex Pell-Lucas numbers respectively, as follows 
\begin{equation}\label{44}
\begin{aligned}
\mathbb{DC}^{P_n}=\{{Q_P}_{n}=&{P}_{n}+i\,{P}_{n+1}+\varepsilon\,{P}_{n+2}+i\,\varepsilon\,{P}_{n+3}\,\left.\right|\,\,\, {P_{n}}\,,\, n-th,\,\text{Pell number}\} 
\end{aligned}
\end{equation}
and
\begin{equation}\label{45}
\begin{aligned}
\mathbb{DC}^{PL_n}=\{{Q_{PL}}_{n}=&{Q}_{n}+i\,{Q}_{n+1}+\varepsilon\,{Q}_{n+2}+i\,\varepsilon\,{Q}_{n+3}\,\left.\right|\,\,\, {Q}_{n}\,,\\
& n-th,\,\text{Pell-Lucas number}\} 
\end{aligned}
\end{equation}
where
\begin{equation*}
\begin{aligned}
{i}^2=-1,\,\varepsilon\neq 0,\, \,{\varepsilon}^{2}=0,\,\,\, (i\,\varepsilon)^2=0.
\end{aligned}
\end{equation*}
Let ${\,{{Q}_P}_{n}}$ and ${\,{{Q}_P}_{m}}$ be two dual-complex Pell quaternions such that
\begin{equation}\label{46} 
{Q_P}_{n}={P}_{n}+i\,{P}_{n+1}+\varepsilon\,{P}_{n+2}+i\,\varepsilon\,{P}_{n+3}
\end{equation}
and 
\begin{equation}\label{47}
{Q_P}_{m}={P}_{m}+i\,{P}_{m+1}+\varepsilon\,{P}_{m+2}+i\,\varepsilon\,{P}_{m+3}
\end{equation}
Then, the addition and subtraction of two dual-complex Pell quaternions are defined in the obvious way,  
\begin{equation}\label{48}
\begin{array}{rl}
{Q_P}_{n}\pm{\,{Q_P}_{m}}=&({P}_{n}+i\,{P}_{n+1}+\varepsilon\,{P}_{n+2}+i\,\varepsilon\,{P}_{n+3}) \\ &\pm ({P}_{m}+i\,{P}_{m+1}+\varepsilon\,{P}_{m+2}+i\,\varepsilon\,{P}_{m+3}) \\
=&({{P}_{n}}\pm{{P}_{m}})+i\,({P}_{n+1}\pm{P}_{m+1})+\varepsilon\,({P}_{n+2}\pm{P}_{m+2}) \\ &+ i\,\varepsilon\,({P}_{n+3}\pm{P}_{m+3}).
\end{array}
\end{equation}
Multiplication of two dual-complex Pell quaternions is defined by 
\begin{equation}\label{49}
\begin{array}{rl}
{Q_P}_{n}\times\,{Q_P}_{m}=&({P}_{n}+i\,{P}_{n+1}+\varepsilon\,{P}_{n+2}+i\,\varepsilon\,{P}_{n+3}) \\&\,({P}_{m}+i\,{P}_{m+1}+\varepsilon\,{P}_{m+2}+i\,\varepsilon\,{P}_{m+3}) \\
=&({P}_{n}{P}_{m}-{P}_{n+1}{P}_{m+1})\\ 
& +i\,({P}_{n+1}{P}_{m}+{P}_{n}{P}_{m+1})\\ 
&+\varepsilon\,({P}_{n}{P}_{m+2}-{P}_{n+1}{P}_{m+3}+{P}_{n+2}{P}_{m}-{P}_{n+3}{P}_{m+1}) \\ 
&+ i\,\varepsilon\,({P}_{n+1}{P}_{m+2}+{P}_{n}{P}_{m+3}+{P}_{n+3}{P}_{m}+{P}_{n+2}{P}_{m+1}) \\
=&{Q_P}_{m}\times\,{Q_P}_{n}\,.
\end{array}
\end{equation}
The scaler and the dual-complex vector parts of the dual-complex Pell quaternion $({{Q}_P}_{n})$ are denoted by 
\begin{equation}\label{50}
{S}_{Q_{P_n}}={P}_{n} \ \ \text{and} \ \ \ {V}_{Q_{P_n}}=i\,{P}_{n+1}+\varepsilon\,{P}_{n+2}+i\,\varepsilon\,{P}_{n+3}.	
\end{equation}
Thus, the dual-complex Pell quaternion ${\,{Q}_P}_{n}$  is given by ${\,{Q}_P}_{n}={S}_{Q_{P_n}}+{V}_{Q_{P_n}}$.            
\par The five types of conjugation given for the dual-complex Pell numbers are the same within the dual-complex Pell quaternions. Furthermore, the conjugation properties for these quaternions are given by the relations in (\ref{25})-(\ref{29}).
In the following theorem, some properties related to the dual-complex Pell quaternions are given. 
\begin{thm} 
Let ${\,{{Q}_P}_{n}}$  be the dual-complex Pell quaternion. In this case, we can give the following relations: 
\begin{equation}\label{51}
{{Q}_P}_{n}+2\,{{Q}_P}_{n+1}={{Q}_P}_{n+2}
\end{equation} \,
\begin{equation}\label{52}
{{Q}_{PL}}_{n}+2\,{{Q}_{PL}}_{n+1}={{Q}_{PL}}_{n+2}
\end{equation} \,
\begin{equation}\label{53}
{{Q}_P}_{n+1}+{{Q}_P}_{n-1}={{Q}_{PL}}_{n}
\end{equation} \,
\begin{equation}\label{54}
{{Q}_P}_{n+2}-{{Q}_P}_{n-2}=2\,{{Q}_{PL}}_{n}
\end{equation} \,
\begin{equation}\label{55}
{{Q}_P}_{n+2}+{{Q}_P}_{n-2}=6\,{{Q}_{P}}_{n}
\end{equation} \,
\begin{equation}\label{56}
\begin{array}{rl}
({{Q}_P}_{n})^2+({{Q}_P}_{n+1})^2=& {{Q}_P}_{2n+1}-2\,{P}_{2n+3}+i\,{P}_{2n+2} \\
&+\varepsilon\,(-11\,{P}_{2n+3}+2\,{P}_{2n+1} \\
&+4\,i\,\varepsilon\,(P_{2n+4}+{P}_{n+2}^2),
\end{array}
\end{equation} \,
\begin{equation}\label{57}
\begin{array}{rl}
({{Q}_P}_{n+1})^2-({{Q}_P}_{n-1})^2=& {{Q}_P}_{2n}+({P}_{2n}-2\,{P}_{2n+2})+3\,i\,{P}_{2n+1} \\
&-\varepsilon\,({P}_{2n+2}+8\,{P}_{2n+3}) \\
&+i\,\varepsilon\,(3\,{P}_{2n+3}-2\,{P}_{2n}+12\,{P}_{n+1}\,{P}_{n+2} \\
&-4\,{P}_{n+1}\,{P}_{n-1}),
\end{array}
\end{equation} \,
\begin{equation}\label{58}
{{Q}_P}_{n}-i\,({{Q}_P}_{n+1})^{*_3}-\varepsilon\,{{Q}_P}_{n+2}-i\,\varepsilon\,{{Q}_P}_{n+3}=2\,(-{P}_{n+1}+\varepsilon\,{P}_{n+4}).
\end{equation}
\end{thm}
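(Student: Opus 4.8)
The plan is to verify the eight identities \eqref{51}--\eqref{58} one at a time by writing out both sides in the basis $\{1,i,\varepsilon,i\varepsilon\}$, using the definitions \eqref{44}--\eqref{45}, and reducing each to a set of identities among Pell and Pell--Lucas numbers. The tools are the recurrences $P_{n+2}=2P_{n+1}+P_n$ and $Q_{n+2}=2Q_{n+1}+Q_n$, the link $Q_n=P_{n+1}+P_{n-1}$ between the two sequences, the Pell identities \eqref{3} and the Cassini formula \eqref{2}, and the multiplication rule \eqref{49} together with $i^2=-1$, $\varepsilon^2=0$, $(i\varepsilon)^2=0$.

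The linear identities reduce coordinate by coordinate to known scalar identities. For \eqref{51} and \eqref{52} each of the four coordinates is precisely the Pell (respectively Pell--Lucas) recurrence $P_{n+k}+2P_{n+k+1}=P_{n+k+2}$, $k=0,1,2,3$. For \eqref{53} the $k$-th coordinate is $P_{n+k+1}+P_{n+k-1}=Q_{n+k}$. For \eqref{54} and \eqref{55} I would first iterate the recurrence to get $P_{m+2}=5P_m+2P_{m-1}$ and $P_{m-2}=P_m-2P_{m-1}$, whence $P_{m+2}-P_{m-2}=4P_m+4P_{m-1}=2Q_m$ and $P_{m+2}+P_{m-2}=6P_m$, and then apply these with $m=n+k$ to each coordinate. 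For \eqref{58} I would expand $(Q_{P_{n+1}})^{*_3}=P_{n+1}-iP_{n+2}-\varepsilon P_{n+3}+i\varepsilon P_{n+4}$ from \eqref{27}, multiply by $i$ using $i^2=-1$, multiply $Q_{P_{n+2}}$ by $\varepsilon$ and $Q_{P_{n+3}}$ by $i\varepsilon$ so that all terms carrying a factor $\varepsilon^2$ vanish, and collect: the $i$- and $i\varepsilon$-coordinates cancel, the scalar coordinate becomes $P_n-P_{n+2}=-2P_{n+1}$, and the $\varepsilon$-coordinate becomes $2P_{n+4}$, which is \eqref{58}.

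The quadratic identities \eqref{56} and \eqref{57} are where the real work lies. Squaring a dual-complex Pell quaternion via \eqref{49} (with $m=n$, then $m=n+1$, then $m=n-1$) produces in each coordinate a sum of products $P_aP_b$ with $a+b$ of a fixed parity; the point is to resum these into a single double-index dual-complex Pell quaternion, $Q_{P_{2n+1}}$ for \eqref{56} and $Q_{P_{2n}}$ for \eqref{57}, plus an explicit correction term. The essential scalar inputs are $P_n^2+P_{n+1}^2=P_{2n+1}$ and $P_n(P_{n+1}+P_{n-1})=P_{2n}$ from \eqref{3}, together with Cassini's formula $P_{n+1}P_{n-1}-P_n^2=(-1)^n$ from \eqref{2}; for instance the scalar coordinate of $(Q_{P_n})^2+(Q_{P_{n+1}})^2$ telescopes to $P_n^2-P_{n+2}^2=P_{2n+1}-P_{2n+3}$, and the $i$-, $\varepsilon$- and $i\varepsilon$-coordinates are handled the same way after pushing $P_{n+2},P_{n+3},P_{n+4}$ down via the recurrence and applying these three identities repeatedly. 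The only genuine obstacle is the bookkeeping: keeping straight, across all four coordinates and the mixed $\varepsilon/i\varepsilon$ terms, which pieces assemble into the double-index quaternion and which stay in the remainder. Once that is organized, everything reduces to mechanical substitution.
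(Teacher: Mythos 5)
Your proposal is correct and follows essentially the same route as the paper: expand everything coordinate-wise in the basis $\{1,i,\varepsilon,i\varepsilon\}$ from \eqref{44}--\eqref{45} and \eqref{27}, then reduce with the Pell/Pell--Lucas recurrences and the classical identities $P_n^2+P_{n+1}^2=P_{2n+1}$, $P_n(P_{n+1}+P_{n-1})=P_{2n}$ and Cassini's formula, which is exactly what the paper does (it treats \eqref{51}--\eqref{55} as immediate and computes \eqref{56}--\eqref{58} directly). One caution: if you carry the bookkeeping for \eqref{56}--\eqref{57} to the end, the correction terms you obtain will not coincide literally with the printed right-hand sides --- the paper's own displayed computation already disagrees with its theorem statement (e.g.\ $-P_{2n+3}$ versus $-2P_{2n+3}$ and the sign of the $\varepsilon$-part in \eqref{56}, and the sign of $(P_{2n}-2P_{2n+2})$ in \eqref{57}) --- so your method is the right one, but expect to have to correct the stated coefficients rather than reproduce them.
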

\begin{proof} 
(\ref{51})-(\ref{55}): It is easily proved using (\ref{44}),(\ref{45}).  
(\ref{56}): By using (\ref{44}) we get,
\begin{equation*}
\begin{array}{rl}
({{Q}_P}_{n})^2+({{Q}_P}_{n+1})^2=&({P}_{2n+1}-{P}_{2n+3})+2\,i\,{P}_{2n+2} \\ 
&+2\,\varepsilon\,({P}_{2n+1}-5\,{P}_{2n+3})+2\,i\,\varepsilon\,(\frac{5}{2}\,{P}_{2n+4}+2\,{P}_{n+2}^2) \\ 
=& ({P}_{2n+1}+i\,{P}_{2n+2}+\varepsilon\,{P}_{2n+3}+i\,\varepsilon\,{P}_{2n+4})-{P}_{2n+3} \\
&+i\,{P}_{2n+2}-\varepsilon\,({P}_{2n+1}-11\,{P}_{2n+3})+4\,i\,\varepsilon\,(P_{2n+4}+{P}_{n+2}^2) \\
=&{{Q}_P}_{2n+1}-{P}_{2n+3}+i\,{P}_{2n+2}-\varepsilon\,({P}_{2n+1}-11\,{P}_{2n+3}) \\
&+4\,i\,\varepsilon\,({P}_{2n+4}+{P}_{n+2}^2). 
\end{array}
\end{equation*}
(\ref{57}): By using (\ref{44}) we get,
\begin{equation*}
\begin{array}{rl}
({{Q}_P}_{n+1})^2-({{Q}_P}_{n-1})^2=&2\,({P}_{2n}-{P}_{2n+2})+4\,i\,{P}_{2n+1}-8\,\varepsilon\,{P}_{2n+3} \\
&+2\,i\,\varepsilon\,(6\,{P}_{n+1}\,{P}_{n+2}-2\,{P}_{n+1}\,{P}_{n-1}+2\,{P}_{2n+3}-{P}_{2n}) \\ 
=& ({P}_{2n}+i\,{P}_{2n+1}+\varepsilon\,{P}_{2n+2}+i\,\varepsilon\,{P}_{2n+3}) \\
&+({P}_{2n}-2\,{P}_{2n+2})+3\,i\,{P}_{2n+1}-\varepsilon\,(8\,{P}_{2n+3}+{P}_{2n+2}) \\
&+i\,\varepsilon\,(3\,{P}_{2n+3}-2\,{P}_{2n}+12\,{P}_{n+1}\,{P}_{n+2}-4\,{P}_{n+1}\,{P}_{n-1})\\ 
=& {{Q}_P}_{2n}-({P}_{2n}-2\,{P}_{2n+2})+3\,i\,{P}_{2n+1} -\varepsilon\,(8\,{P}_{2n+3}+{P}_{2n+2}) \\
&+i\,\varepsilon\,(3\,{P}_{2n+3}-2\,{P}_{2n}+12\,{P}_{n+1}\,{P}_{n+2}-4\,{P}_{n+1}\,{P}_{n-1}) . 
\end{array}
\end{equation*}
(\ref{58}): By using (\ref{44}) and (\ref{27}) we get,
\begin{equation*}
\begin{array}{rl}
{{Q}_P}_{n}-i\,({{Q}_P}_{n+1})^{*_3}-\varepsilon\,{{Q}_P}_{n+2}-i\,\varepsilon\,{{Q}_P}_{n+3}=&({P}_{n}-{P}_{n+2})+2\,\varepsilon\,{P}_{n+4} \\
=&-2\,({P}_{n+1}+\varepsilon\,{P}_{n+4}).
\end{array}
\end{equation*}
\end{proof} 
\begin{thm} 
For \,$n,m\ge 0$ the Honsberger identity for  the dual-complex Pell quaternions ${Q_P}_{n}$ and ${Q_P}_{m}$ \, is given by
\begin{equation}\label{59}
\begin{array}{rl}
{Q_P}_{n}\,{Q_P}_{m}+{Q_P}_{n+1}\,{Q_P}_{m+1}=& {Q_P}_{n+m+1}-{P}_{n+m+3}+i\,{P}_{n+m+2} \\
&+\varepsilon\,({P}_{n+m+3}-2\,{P}_{n+m+5})+3\,i\,\varepsilon\,{P}_{n+m+4}.
\end{array}
\end{equation}
\end{thm}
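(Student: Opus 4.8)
The plan is to expand both products ${Q_P}_n\,{Q_P}_m$ and ${Q_P}_{n+1}\,{Q_P}_{m+1}$ with the multiplication rule (\ref{49}), add the two expansions componentwise in the basis $\{1,i,\varepsilon,i\,\varepsilon\}$, and then reduce the four resulting polynomials in Pell numbers to the claimed form by repeated use of the Pell identities (\ref{2}), (\ref{3}) and the recurrence $P_{k+1}=2\,P_k+P_{k-1}$. It is convenient to write $z_n=P_n+i\,P_{n+1}$, so that ${Q_P}_n=z_n+\varepsilon\,z_{n+2}$ and ${Q_P}_n\,{Q_P}_m=z_nz_m+\varepsilon(z_nz_{m+2}+z_{n+2}z_m)$; then ${Q_P}_n\,{Q_P}_m+{Q_P}_{n+1}\,{Q_P}_{m+1}$ has $\{1,i\}$-part $z_nz_m+z_{n+1}z_{m+1}$ and $\varepsilon$-part $(z_nz_{m+2}+z_{n+2}z_m)+(z_{n+1}z_{m+3}+z_{n+3}z_{m+1})$.

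For the $\{1,i\}$-part I would use the convolution identity $P_{r+1}P_{s+1}+P_rP_s=P_{r+s+1}$, which follows from $P_r\,P_{n+1}+P_{r-1}\,P_n=P_{n+r}$ in (\ref{3}) after the shift $r\mapsto r+1$, $n\mapsto s$; this yields real part $P_nP_m-P_{n+2}P_{m+2}=P_{n+m+1}-P_{n+m+3}=-2\,P_{n+m+2}$, and, after eliminating the $P_nP_m$-type cross terms that appear via $P_{s+1}=2\,P_s+P_{s-1}$, imaginary part $P_nP_{m+1}+P_{n+1}P_m+P_{n+1}P_{m+2}+P_{n+2}P_{m+1}=2\,P_{n+m+2}$. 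For the two dual coordinates I would first record the cancellations that already occur between the $n$- and $(n+1)$-shifted expansions — in the $\varepsilon$-coordinate the terms $\pm P_{n+1}P_{m+3}$ and $\pm P_{n+3}P_{m+1}$ drop out, leaving $P_nP_{m+2}+P_{n+2}P_m-P_{n+2}P_{m+4}-P_{n+4}P_{m+2}$ — and then apply the same convolution identity, with appropriately shifted indices, to collapse each surviving product-sum, obtaining $2\,(P_{n+m+3}-P_{n+m+5})$ for the $\varepsilon$-coordinate and $4\,P_{n+m+4}$ for the $i\,\varepsilon$-coordinate. Finally one recognizes $P_{n+m+1}+i\,P_{n+m+2}+\varepsilon\,P_{n+m+3}+i\,\varepsilon\,P_{n+m+4}={Q_P}_{n+m+1}$ and rewrites the four coordinates as ${Q_P}_{n+m+1}$ plus the correction $-P_{n+m+3}+i\,P_{n+m+2}+\varepsilon(P_{n+m+3}-2\,P_{n+m+5})+3\,i\,\varepsilon\,P_{n+m+4}$ displayed in (\ref{59}).

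The main obstacle is the bookkeeping in the two dual coordinates: each is an eight-term sum of products of Pell numbers, and the reductions pass through intermediate expressions containing $P_aP_b$-type terms that cancel only after the $n$- and $(n+1)$-contributions are combined, so one must keep careful track of which products survive and which instance of the convolution identity (with which index shift) applies to each surviving pair. Once the shifts are handled correctly, every remaining step is a direct substitution using (\ref{2}), (\ref{3}) and the Pell recurrence. It is probably cleanest to verify the identity coordinate by coordinate in the order $1$, $i$, $\varepsilon$, $i\,\varepsilon$, checking each reduction against small values of $n$ and $m$ as a guard against sign errors.
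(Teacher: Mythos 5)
Your proposal is correct and follows essentially the same route as the paper: expand the two products coordinatewise in the basis $\{1,\,i,\,\varepsilon,\,i\,\varepsilon\}$ and collapse each coordinate with the Pell convolution identity ${P}_{n}{P}_{m}+{P}_{n+1}{P}_{m+1}={P}_{n+m+1}$, exactly as in the paper's proof. Your shorthand ${Q_P}_{n}=z_n+\varepsilon\,z_{n+2}$ with $z_n=P_n+i\,P_{n+1}$ is just tidier bookkeeping for the same computation, and your reduced coordinates $-2\,P_{n+m+2}$, $2\,P_{n+m+2}$, $2\,(P_{n+m+3}-P_{n+m+5})$, $4\,P_{n+m+4}$ match the paper's intermediate result and the stated identity (\ref{59}).
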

\begin{proof}
(\ref{59}): By using (\ref{44}) we get,
\begin{equation*}
\begin{array}{rl}
{{Q}_P}_{n}\,{{Q}_P}_{m}+{{Q}_P}_{n+1}\,{{Q}_P}_{m+1}=&[({P}_{n}{P}_{m}-{P}_{n+2}{P}_{m+2})\\
\quad \quad \quad +&\,i\,[({P}_{n}{P}_{m+1}+{P}_{n+1}{P}_{m+2})+({P}_{n+1}{P}_{m}+{P}_{n+2}{P}_{m+1}) \\
\quad \quad \quad +&\varepsilon\,[({P}_{n}{P}_{m+2}+{P}_{n+1}{P}_{m+3})-({P}_{n+1}{P}_{m+3}+{P}_{n+2}{P}_{m+4}) \\ \quad \quad \quad +&({P}_{n+2}{P}_{m}+{P}_{n+3}{P}_{m+1})-({P}_{n+3}{P}_{m+1}+{P}_{n+4}{P}_{m+2})] \\
\quad \quad \quad +&i\,\varepsilon\,[({P}_{n}{P}_{m+3}+{P}_{n+1}{P}_{m+4})+({P}_{n+1}{P}_{m+2}+{P}_{n+2}{P}_{m+3}) \\ \quad \quad \quad +&({P}_{n+2}{P}_{m+1}+{P}_{n+3}{P}_{m+2})+({P}_{n+3}{P}_{m}+{P}_{n+4}{P}_{m+1})] \\
=&-2\,({P}_{n+m+2})+2\,i\,{P}_{n+m+2}+2\,\varepsilon\,({P}_{n+m+3}-{P}_{n+m+5}) \\
\quad \quad \quad +&\,4\,i\,\varepsilon\,{P}_{n+m+4} \\
=&{Q_P}_{n+m+1}-{P}_{n+m+3}+i\,{P}_{n+m+2} \\
\quad \quad \quad +& \varepsilon\,({P}_{n+m+3}-{P}_{n+m+5})+3\,i\,\varepsilon\,{P}_{n+m+4}.
\end{array}
\end{equation*} \\
where the identity ${P}_{n}{P}_{m}+{P}_{n+1}{P}_{m+1}={P}_{n+m+1}$ is used \,\cite{I,Z,Z1}. 
\end{proof}
\begin{thm} 
For $n,m\ge 0$ the d'Ocagne's identity for  the dual-complex Pell quaternions ${Q_P}_{n}$ and ${Q_P}_{m}$ is given by
\begin{equation}\label{60}
\begin{array}{rl}
{Q_P}_{m}\,{Q_P}_{n+1}-{Q_P}_{m+1}\,{Q_P}_{n}=&(-1)^{n}\,2\,{P}_{m-n}\,(1+i\,+6\varepsilon\,+6\,i\,\varepsilon\,).
\end{array}
\end{equation}
\end{thm}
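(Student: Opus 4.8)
The plan is to expand the two products on the left of (\ref{60}) directly from the multiplication rule (\ref{49}), collect the resulting expression along the basis $\{1,i,\varepsilon,i\,\varepsilon\}$, and reduce each of the four components to a multiple of $P_{m-n}$ by means of a d'Ocagne-type identity for the Pell sequence. The identity I would isolate first is
\begin{equation*}
P_{a}\,P_{b+k}-P_{a+k}\,P_{b}=(-1)^{b}\,P_{k}\,P_{a-b},\qquad k\ge 0,
\end{equation*}
which drops out of Binet's formula $P_{n}=(\alpha^{n}-\beta^{n})/(\alpha-\beta)$ with $\alpha\beta=-1$ (or may be quoted from the Pell identities in \cite{I,Z,Z1}). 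Specializing to $k=1,2,3,4$ and inserting $P_{1}=1$, $P_{2}=2$, $P_{3}=5$, $P_{4}=12$ is exactly what will generate the coefficients $1,1,6,6$ that appear after the common factor $2(-1)^{n}P_{m-n}$ in (\ref{60}).

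Concretely, I would write $Q_{P_m}\,Q_{P_{n+1}}$ by putting $n\mapsto m$, $m\mapsto n+1$ in (\ref{49}) and $Q_{P_{m+1}}\,Q_{P_n}$ by putting $n\mapsto m+1$, $m\mapsto n$, and then subtract component by component. In the $i$-part and the $i\,\varepsilon$-part almost everything cancels: they collapse to $P_{m}P_{n+2}-P_{m+2}P_{n}$ and $P_{m}P_{n+4}-P_{m+4}P_{n}$, which by the displayed identity with $k=2$ and $k=4$ equal $2(-1)^{n}P_{m-n}$ and $12(-1)^{n}P_{m-n}$. The scalar part becomes $(P_{m}P_{n+1}-P_{m+1}P_{n})-(P_{m+1}P_{n+2}-P_{m+2}P_{n+1})$; the identity with $k=1$ applied at base points $(m,n)$ and $(m+1,n+1)$ turns this into $(-1)^{n}P_{m-n}-(-1)^{n+1}P_{m-n}=2(-1)^{n}P_{m-n}$.

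The only step that requires genuine care is the $\varepsilon$-part, which after the subtraction is a sum of eight mixed products, namely $P_{m}P_{n+3}-P_{m+1}P_{n+4}+P_{m+2}P_{n+1}-P_{m+3}P_{n+2}-P_{m+1}P_{n+2}+P_{m+2}P_{n+3}-P_{m+3}P_{n}+P_{m+4}P_{n+1}$. I would regroup these into the four brackets $P_{m}P_{n+3}-P_{m+3}P_{n}$, $P_{m+2}P_{n+3}-P_{m+3}P_{n+2}$, $-(P_{m+1}P_{n+4}-P_{m+4}P_{n+1})$ and $P_{m+2}P_{n+1}-P_{m+1}P_{n+2}$, and evaluate each with the displayed identity for the appropriate $k\in\{1,3\}$ and base index, obtaining $5(-1)^{n}P_{m-n}$, $(-1)^{n}P_{m-n}$, $5(-1)^{n}P_{m-n}$ and $(-1)^{n}P_{m-n}$, whose sum is $12(-1)^{n}P_{m-n}$. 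Collecting the four components gives
\begin{equation*}
Q_{P_m}\,Q_{P_{n+1}}-Q_{P_{m+1}}\,Q_{P_n}=2(-1)^{n}P_{m-n}\,(1+i+6\,\varepsilon+6\,i\,\varepsilon),
\end{equation*}
which is (\ref{60}). The main obstacle is purely organizational: one must keep track of the index shifts so that the parity factor $(-1)^{b}$ in the d'Ocagne identity comes out correctly in each of the eight $\varepsilon$-terms, since a single mislabelled parity already changes the coefficient $6$.
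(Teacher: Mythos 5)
Your proposal is correct and follows essentially the same route as the paper: expand both products componentwise via the multiplication rule and reduce each of the four coefficients with d'Ocagne-type Pell identities, arriving at $2(-1)^nP_{m-n}(1+i+6\varepsilon+6i\varepsilon)$. The only cosmetic difference is that you invoke the generalized identity $P_aP_{b+k}-P_{a+k}P_b=(-1)^bP_kP_{a-b}$ with $k=1,2,3,4$ directly, whereas the paper groups the terms into adjacent-index pairs (the $k=1$ case) and then uses $P_{k+1}-P_{k-1}=2P_k$, $P_{k+2}+P_{k-2}=6P_k$, $P_{k+3}-P_{k-3}=14P_k$ to reach the same coefficients.
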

\begin{proof}
(\ref{60}): By using (\ref{44}) we get,
\begin{equation*}
\begin{array}{rl}
{Q_P}_{m}\,{Q_P}_{n+1}-{Q_P}_{m+1}\,{Q_P}_{n}=&[\,({P}_{m}{P}_{n+1}-{P}_{m+1}{P}_{n})-({P}_{m+1}{P}_{n+2}-{P}_{m+2}{P}_{n+1})\,]\\
&+\,i\,[({P}_{m}{P}_{n+2}-{P}_{m+1}{P}_{n+1})+({P}_{m+1}{P}_{n+1}-{P}_{m+2}{P}_{n})\,] \\
&+\varepsilon\,[({P}_{m}{P}_{n+3}-{P}_{m+1}{P}_{n+2})-({P}_{m+1}{P}_{n+4}-{P}_{m+2}{P}_{n+3}) \\
&+({P}_{m+2}{P}_{n+1}-{P}_{m+3}{P}_{n})-({P}_{m+3}{P}_{n+2}-{P}_{m+4}{P}_{n+1})] \\
&+i\,\varepsilon\,[({P}_{m}{P}_{n+4}-{P}_{m+1}{P}_{n+3})+({P}_{m+1}{P}_{n+3}-{P}_{m+2}{P}_{n+2}) \\
&+({P}_{m+2}{P}_{n+2}-{P}_{m+3}{P}_{n+1})+({P}_{m+3}{P}_{n+1}-{P}_{m+4}{P}_{n})] \\
=&(-1)^n\,2\,({P}_{m-n})+i\,(-1)^n\,({P}_{m-n+1}-{P}_{m-n-1}) \\
&+2\,\varepsilon\,\,(-1)^n\,({P}_{m-n+2}+{P}_{m-n-2})\, \\
&+i\,\varepsilon\,[\,(-1)^n\,({P}_{m-n+3}-{P}_{m-n-3})+({P}_{m-n-1}-{P}_{m-n+1})] \\
=&(-1)^{n}\,2\,{P}_{m-n}\,(1+i\,+6\,\varepsilon\,+6\,i\,\varepsilon\,).
\end{array}
\end{equation*}
where the identity ${P}_{n}{P}_{m}+{P}_{n+1}{P}_{m+1}={P}_{n+m+1}$ and ${P}_{n+3}-{P}_{n-3}=14\,{P}_{n}$ are used \cite{I,Z,Z1}.
\end{proof}
\begin{thm} 
Let ${Q_P}_{n}$ be the dual-complex Pell quaternion.Then, we have the following identities
\begin{equation}\label{61}
\sum\limits_{s=1}^{n}{\,{{Q}_P}_{s}}=\frac{1}{4}[\,{{Q}_{PL}}_{n+1}-{{Q}_{PL}}_{1}\,],
\end{equation}
\begin{equation}\label{62}
\sum\limits_{s=0}^{p}{\,{{Q}_P}_{n+s}}=\frac{1}{4}[\,{{Q}_{PL}}_{n+p+1}-{{Q}_{PL}}_{n+1}\,],
\end{equation}
\begin{equation}\label{63}
\sum\limits_{s=1}^{n}{\,{{Q}_P}_{2s-1}}=\frac{1}{2}\,({{Q}_P}_{2n}-{{Q}_P}_{0}),
\end{equation}
\begin{equation}\label{64}
\sum\limits_{s=1}^{n}{\,{{Q}_P}_{2s}}=\frac{1}{2}\,[\,{{Q}_P}_{2n+1}-{{Q}_P}_{1}\,].
\end{equation} 
\end{thm}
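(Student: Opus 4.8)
The plan is to reduce all four sums to telescoping series, using only the linear-recurrence relations already recorded for the dual-complex Pell and Pell-Lucas quaternions together with a couple of elementary scalar Pell identities. The key structural remark is that the assignment $X_k\mapsto X_k+i\,X_{k+1}+\varepsilon\,X_{k+2}+i\,\varepsilon\,X_{k+3}$ used in (\ref{44}) and (\ref{45}) is $\mathbb{R}$-linear in the sequence $(X_k)$ and intertwines index shifts; consequently every $\mathbb{R}$-linear, shift-covariant identity among Pell (resp.\ Pell-Lucas) numbers lifts verbatim to one among the ${Q_P}_k$ (resp.\ ${Q_{PL}}_k$). This is exactly why the scalar recurrences reappear as (\ref{51})--(\ref{53}), and it is the only mechanism I will need.

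For (\ref{61}) I would start from the scalar identity $Q_{s+1}-Q_s=4\,P_s$ for the Pell-Lucas numbers $Q_s$, which is the telescoped form of the classical partial-sum formula $\sum_{s=1}^{n}P_s=\frac14(Q_{n+1}-Q_1)$ (see \cite{I,Z,Z1}; it is also immediate from the Binet expressions). Lifting it through the map above gives $4\,{Q_P}_s={Q_{PL}}_{s+1}-{Q_{PL}}_s$, and summing over $s=1,\dots,n$ telescopes to $4\sum_{s=1}^{n}{Q_P}_s={Q_{PL}}_{n+1}-{Q_{PL}}_1$, which is (\ref{61}). Identity (\ref{62}) follows by applying the same relation $4\,{Q_P}_{n+s}={Q_{PL}}_{n+s+1}-{Q_{PL}}_{n+s}$ termwise, or equivalently by writing $\sum_{s=0}^{p}{Q_P}_{n+s}$ as a difference of two sums of the type (\ref{61}); this reduces to a short index computation on the lower boundary term.

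For (\ref{63}) and (\ref{64}) I would instead use (\ref{51}) rewritten as ${Q_P}_{k+1}-{Q_P}_{k-1}=2\,{Q_P}_k$. Taking $k=2s-1$ gives $2\,{Q_P}_{2s-1}={Q_P}_{2s}-{Q_P}_{2s-2}$, and summing over $s=1,\dots,n$ telescopes to $2\sum_{s=1}^{n}{Q_P}_{2s-1}={Q_P}_{2n}-{Q_P}_0$, which is (\ref{63}). Taking $k=2s$ gives $2\,{Q_P}_{2s}={Q_P}_{2s+1}-{Q_P}_{2s-1}$, and the analogous telescoping yields $2\sum_{s=1}^{n}{Q_P}_{2s}={Q_P}_{2n+1}-{Q_P}_1$, which is (\ref{64}). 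Alternatively one can expand each ${Q_P}_{2s-1}$ and ${Q_P}_{2s}$ into its four coordinates and invoke the scalar identities $\sum_{s=1}^{n}P_{2s-1}=\frac12P_{2n}$ and $\sum_{s=1}^{n}P_{2s}=\frac12(P_{2n+1}-1)$ from \cite{I,Z,Z1}, obtaining the same conclusions after collecting coordinates.

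The argument has no conceptual obstacle; the only genuine work is bookkeeping. Concretely one must (i) confirm the small scalar inputs $Q_{s+1}-Q_s=4\,P_s$, $\sum_{s=1}^{n}P_{2s-1}=\frac12P_{2n}$, $\sum_{s=1}^{n}P_{2s}=\frac12(P_{2n+1}-1)$ from the recurrences or the Binet forms, and (ii) check that when the telescoping boundary terms are expanded coordinatewise they reassemble exactly into the quaternions ${Q_{PL}}_1$, ${Q_{PL}}_{n+1}$, ${Q_P}_0$, ${Q_P}_1$, ${Q_P}_{2n}$, ${Q_P}_{2n+1}$ appearing in (\ref{61})--(\ref{64}), with no leftover scalar term. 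The telescoping formulation makes even this routine, since the boundary data emerges already packaged as dual-complex Pell(-Lucas) quaternions.
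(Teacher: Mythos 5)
Your telescoping treatment of (\ref{61}), (\ref{63}) and (\ref{64}) is correct, and it is a genuinely different route from the paper's: the paper expands each sum coordinatewise, applies the scalar Pell partial-sum formulas, and then reassembles the four coordinates into quaternions, whereas you lift the scalar identities $Q_{s+1}-Q_s=4P_s$ and $P_{k+1}-P_{k-1}=2P_k$ to the quaternion level via the linearity and shift-equivariance of $X_k\mapsto X_k+i\,X_{k+1}+\varepsilon\,X_{k+2}+i\,\varepsilon\,X_{k+3}$ and telescope directly. That lifting principle is sound, your scalar inputs are correct, and for (\ref{61}), (\ref{63}), (\ref{64}) the boundary terms come out exactly as claimed; your route is arguably cleaner than the paper's bookkeeping.

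The gap is in (\ref{62}). Your telescoping gives $4\sum_{s=0}^{p}{Q_P}_{n+s}={Q_{PL}}_{n+p+1}-{Q_{PL}}_{n}$, with lower boundary ${Q_{PL}}_{n}$, not ${Q_{PL}}_{n+1}$, and the ``short index computation on the lower boundary term'' that you defer cannot turn one into the other: the identity (\ref{62}) as printed is false. For $n=p=1$ the scalar part of ${Q_P}_{1}+{Q_P}_{2}$ is $P_1+P_2=3$, while the scalar part of $\tfrac14({Q_{PL}}_{3}-{Q_{PL}}_{2})$ is $\tfrac14(Q_3-Q_2)=2$; with ${Q_{PL}}_{n}$ in place of ${Q_{PL}}_{n+1}$ one gets $\tfrac14(Q_3-Q_1)=3$, as required. (The paper's own proof of (\ref{62}) rests on the incorrect partial-sum step $\sum_{s=0}^{p}P_{n+s}=\tfrac12(P_{n+p+1}+P_{n+p}-P_{n+1}-P_{n})$; the subtracted pair should be $P_{n}+P_{n-1}$, which is where the off-by-one originates.) So your method in fact proves the corrected identity, but as a proof of the statement as given it fails precisely at the step you waved through, and the discrepancy should have been flagged rather than dismissed as bookkeeping.
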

\begin{proof}
(\ref{61}): 
\begin{equation*}
\begin{aligned}
  \sum\limits_{s=1}^{n}{{Q}_P}_{s}=&\sum\limits_{s=1}^{n}{{P}_{s}}+i\,\sum\limits_{s=1}^{n}{{P}_{s+1}}+\varepsilon\,\sum\limits_{s=1}^{n}{{P}_{s+2}}+i\,\varepsilon\,\sum\limits_{s=1}^{n}{{P}_{s+3}} \\
=&\frac{1}{2}[(P_{n}+{P_{n+1}}-P_{1}-P_{0})+i\,({P}_{n+1}+{P_{n+2}}-P_{2}-P_{1})\\
&+\varepsilon\,({{P}_{n+2}}+{P_{n+3}}-P_{3}-P_{2})+i\,\varepsilon\,({{P}_{n+3}}+{P_{n+4}}-P_{4}-P_{3})] \\
=&\frac{1}{2}\,[{Q_P}_{n}+{Q_P}_{n+1}-{Q_P}_{1}-{Q_P}_{0}] \\
=&\frac{1}{4}[\,{{Q}_{PL}}_{n+1}-{{Q}_{PL}}_{1}\,]\,. 
\end{aligned}
\end{equation*}
(\ref{62}): Hence,  we can write
\begin{equation*}
\begin{aligned}
 \sum\limits_{s=0}^{p}{{Q}_P}_{n+s}=&\sum\limits_{s=0}^{p}{{P}_{n+s}}+i\,\sum\limits_{s=0}^{p}{{P}_{n+s+1}}+\varepsilon\,\sum\limits_{s=0}^{p}{{P}_{n+s+2}}+i\,\varepsilon\,\sum\limits_{s=0}^{p}{{P}_{n+s+3}} \\
=&\frac{1}{2}[({{P}_{n+p+1}}+{{P}_{n+p}}-P_{n+1}-P_{n}) \\
&+i\,({P}_{n+p+2}+{P}_{n+p+1}-P_{n+2}-P_{n+1}) \\
&+\varepsilon\,({{P}_{n+p+3}}+{{P}_{n+p+2}}-P_{n+3}-P_{n+2}) \\
&+i\,\varepsilon\,({P}_{n+p+4}+{P}_{n+p+3}-P_{n+4}-P_{n+3})] \\
=&\frac{1}{2}[\,{{Q}_P}_{n+p+1}+{{Q}_P}_{n+p}-{{Q}_P}_{n+1}-{{Q}_P}_{n}\,] \\
=&\frac{1}{4}[\,{{Q}_{PL}}_{n+p+1}-{{Q}_{PL}}_{n+1}\,]\,. 
\end{aligned}
\end{equation*}
(\ref{63}) Hence,  we can write 
\begin{equation*}
\begin{aligned}
& \sum\limits_{s=1}^{n}{{Q}_P}_{2s-1}=\sum\limits_{s=1}^{n}{P}_{2s-1}+i\,\sum\limits_{s=1}^{n}{P}_{2s}+\varepsilon\,\sum\limits_{s=1}^{n}{P}_{2s+1}+i\,\varepsilon\,\sum\limits_{s=1}^{n}{P}_{2s+2} \\
&\quad \quad \quad \quad \quad =({P}_{1}+{P}_{3}+\ldots+{P}_{2n-1})+i({P}_{2}+{P}_{4}+\ldots+{P}_{2n}) \\
&\quad \quad \quad \quad \quad \quad +\varepsilon\,({P}_{3}+{P}_{5}+\ldots+{P}_{2n+1})+i\,\varepsilon\,({P}_{4}+{P}_{6}+\ldots+{P}_{2n+2}) \\
&\quad \quad \quad \quad \quad =\frac{1}{2}\,[({P}_{2n}-P_{0})+i\,({P}_{2n+1}-P_{1}) \\
&\quad \quad \quad \quad \quad+\varepsilon\,({P}_{2n+2}-P_{2})+i\,\varepsilon\,({P}_{2n+3}-P_{3})]  \\
&\quad \quad \quad \quad \quad =\frac{1}{2}\,[{P}_{2n}+i\,{P}_{2n+1}+\varepsilon\,{P}_{2n+2}+i\,\varepsilon\,{P}_{2n+3}] \\
&\quad \quad \quad \quad \quad-\frac{1}{2}\,[{P}_{0}+i\,{P}_{1}+\varepsilon\,{P}_{2}+i\,\varepsilon\,{P}_{3}]\\
&\quad \quad \quad \quad \quad =\frac{1}{2}\,({{Q}_P}_{2n}-{{Q}_P}_{0}).
\end{aligned}
\end{equation*}
(\ref{64}): Hence,  we obtain
\begin{equation*}
\begin{aligned}
 \sum\limits_{s=1}^{n}{{Q}_P}_{2s}=&\,
({P}_{2}+{P}_{4}+\ldots+{P}_{2n})+i\,({P}_{3}+{P}_{5}+\ldots+{P}_{2n+1})
\\ &+\varepsilon\,({P}_{4}+{P}_{6}+\ldots+{P}_{2n+2})+i\,\varepsilon\,({P}_{5}+{P}_{7}+\ldots+{P}_{2n+3}) \\
=&\frac{1}{2}\,[({P}_{2n+1}-P_{1})+i\,({P}_{2n+2}-P_{2})+\varepsilon\,({P}_{2n+3}-P_{3}) \\ 
&+i\,\varepsilon\,({P}_{2n+4}-P_{4})]  \\
=&\,\frac{1}{2}[\,{P}_{2n+1}+i\,{P}_{2n+2}+\varepsilon\,{P}_{2n+3}+i\,\varepsilon\,{P}_{2n+4}\,] \\ &-\frac{1}{2}[\,P_{1}+i\,P_{2}+\varepsilon\,P_{3}+i\,\varepsilon\,P_{4}\,]\\
 =&\,\frac{1}{2}\,[\,{{Q}_P}_{2n+1}-{{Q}_P}_{1}\,]\,. 
\end{aligned}
\end{equation*}
\end{proof}
\begin{thm} \textbf{Binet's Formula} 
Let ${Q_P}_{n}$ be the dual-complex Pell quaternion. For $n\ge 1$, Binet's formula for these quaternions is as follows:
\begin{equation}\label{65}
{Q_P}_{n}=\frac{1}{\alpha -\beta }\left( \,\hat{\alpha }\,\,{\alpha }^{n}-\hat{\beta \,}\,{\beta }^{n} \right)\,
\end{equation}
where
\begin{equation*}
\begin{array}{l}
\hat{\alpha }=1+i\,{\alpha}+\varepsilon\,{\alpha}^2+i\,\varepsilon\,{\alpha}^3,\,\,\,\,\, \alpha={1+\sqrt2}
\end{array}
\end{equation*}
and
\begin{equation*}
\begin{array}{l}
\hat{\beta }=1+i\,{\beta}+\varepsilon\,{\beta}^2+i\,\varepsilon\,{\beta}^3,\,\,\,\,\, \beta={1-\sqrt2}.
\end{array}
\end{equation*}
\end{thm}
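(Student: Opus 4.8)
The plan is to deduce this from the classical Binet formula for the Pell numbers, so that essentially no fresh computation is required beyond a single factorization. Recall $P_n=\frac{\alpha^n-\beta^n}{\alpha-\beta}$, where $\alpha=1+\sqrt2$ and $\beta=1-\sqrt2$ are the roots of $x^2-2x-1=0$; in particular $\alpha+\beta=2$, $\alpha\beta=-1$ and $\alpha-\beta=2\sqrt2$. First I would insert this into the defining relation (\ref{44}), obtaining
\[
{Q_P}_n=\frac{\alpha^n-\beta^n}{\alpha-\beta}+i\,\frac{\alpha^{n+1}-\beta^{n+1}}{\alpha-\beta}+\varepsilon\,\frac{\alpha^{n+2}-\beta^{n+2}}{\alpha-\beta}+i\,\varepsilon\,\frac{\alpha^{n+3}-\beta^{n+3}}{\alpha-\beta}.
\]
Collecting the four terms carrying $\alpha^{n}$ and, separately, the four terms carrying $\beta^{n}$ gives
\[
{Q_P}_n=\frac{1}{\alpha-\beta}\Bigl[\alpha^n\bigl(1+i\,\alpha+\varepsilon\,\alpha^2+i\,\varepsilon\,\alpha^3\bigr)-\beta^n\bigl(1+i\,\beta+\varepsilon\,\beta^2+i\,\varepsilon\,\beta^3\bigr)\Bigr],
\]
and the two bracketed factors are, by the definitions in the statement, exactly $\hat{\alpha}$ and $\hat{\beta}$; this is (\ref{65}).

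As an independent check one can instead argue by induction on $n$. The recurrence ${Q_P}_{n+2}=2\,{Q_P}_{n+1}+{Q_P}_n$ is merely a rearrangement of (\ref{51}), and because $\alpha,\beta$ satisfy $x^2=2x+1$ the sequence $n\mapsto\frac{1}{\alpha-\beta}\bigl(\hat{\alpha}\,\alpha^n-\hat{\beta}\,\beta^n\bigr)$ obeys the same second-order linear recurrence. Hence it suffices to verify agreement at two consecutive indices, say $n=1$ and $n=2$, which is a short computation using $P_1=1,\ P_2=2,\ P_3=5,\ P_4=12$ together with $\alpha\beta=-1$ and $\alpha-\beta=2\sqrt2$.

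There is no genuine obstacle here; the only point that needs a little care is the index bookkeeping — namely that shifting $P_n$ to $P_{n+k}$ corresponds precisely to multiplying $\alpha^n$ (resp.\ $\beta^n$) by $\alpha^k$ (resp.\ $\beta^k$) — and this is immediate from Binet's formula for $P_n$. The numbers $\hat{\alpha}$ and $\hat{\beta}$ in the statement were defined exactly so that the factorization in the second display closes up, so the computation is forced once the substitution is made.
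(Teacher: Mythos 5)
Your proposal is correct: substituting the classical Binet formula $P_n=\frac{\alpha^n-\beta^n}{\alpha-\beta}$ into the definition (\ref{44}) and collecting the $\alpha^n$ and $\beta^n$ terms yields exactly (\ref{65}), and since $\alpha,\beta$ are real scalars in a commutative algebra there is no ordering issue; the inductive check via the recurrence in (\ref{51}) is also sound. This is essentially the same argument the paper relies on, except that the paper's ``proof'' is only a one-line remark that the formula coincides with Binet's formula for Pell quaternions (citing Horadam), so your version simply supplies the short derivation that the paper leaves to the reference.
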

\begin{proof} 
Binet's formula of the dual-complex Pell quaternion is the same as Binet's formula of the Pell quaternion \cite{I}.
\end{proof}
\begin{thm} \textbf{Cassini's Identity} 
Let ${Q_P}_{n}$ be the dual-complex Pell quaternion. For $n\ge 1$, Cassini's identity for ${Q_P}_{n}$ is as follows: 
\begin{equation}\label{66}
{Q_F}_{n-1}\,{Q_F}_{n+1}-{Q_F}_{n}^2=(-1)^{n}\,2\,(1+i\,+6\varepsilon\,+6\,i\,\varepsilon\,). 
\end{equation}
\end{thm}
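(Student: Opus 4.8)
The plan is to obtain Cassini's identity as a direct consequence of Binet's formula (Theorem~3.5), imitating the classical argument for Pell numbers in \cite{I}. Writing $Q_{P_n}=\frac{1}{\alpha-\beta}\big(\hat\alpha\,\alpha^{n}-\hat\beta\,\beta^{n}\big)$ with $\hat\alpha,\hat\beta$ as in (\ref{65}), I would substitute into $Q_{P_{n-1}}\,Q_{P_{n+1}}-Q_{P_{n}}^{2}$ and expand. Since the dual-complex Pell quaternions commute (cf. (\ref{49})), the pure terms $\hat\alpha^{2}\alpha^{2n}$ and $\hat\beta^{2}\beta^{2n}$ cancel between $Q_{P_{n-1}}Q_{P_{n+1}}$ and $Q_{P_n}^{2}$, leaving only the mixed terms. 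Collecting them gives $\frac{\hat\alpha\hat\beta}{(\alpha-\beta)^{2}}(\alpha\beta)^{n-1}\big(2\alpha\beta-\alpha^{2}-\beta^{2}\big)=-\hat\alpha\hat\beta\,(\alpha\beta)^{n-1}$, using $2\alpha\beta-\alpha^{2}-\beta^{2}=-(\alpha-\beta)^{2}$. As $\alpha\beta=(1+\sqrt2)(1-\sqrt2)=-1$, this equals $(-1)^{n}\,\hat\alpha\hat\beta$.

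It then remains to evaluate the constant dual-complex factor $\hat\alpha\hat\beta$. Multiplying $\hat\alpha=1+i\alpha+\varepsilon\alpha^{2}+i\varepsilon\alpha^{3}$ by $\hat\beta=1+i\beta+\varepsilon\beta^{2}+i\varepsilon\beta^{3}$ according to Table~1 (so every term carrying $\varepsilon^{2}$ or $(i\varepsilon)^{2}$ vanishes), the four components collapse into symmetric functions of $\alpha,\beta$: the scalar part is $1-\alpha\beta$, the $i$-part is $\alpha+\beta$, the $\varepsilon$-part is $(\alpha^{2}+\beta^{2})(1-\alpha\beta)$, and the $i\varepsilon$-part is $\alpha^{3}+\beta^{3}+\alpha\beta(\alpha+\beta)$. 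Substituting $\alpha+\beta=2$, $\alpha\beta=-1$, $\alpha^{2}+\beta^{2}=6$, $\alpha^{3}+\beta^{3}=14$ yields $\hat\alpha\hat\beta=2+2i+12\varepsilon+12i\varepsilon=2(1+i+6\varepsilon+6i\varepsilon)$, and hence $Q_{P_{n-1}}Q_{P_{n+1}}-Q_{P_{n}}^{2}=(-1)^{n}\,2(1+i+6\varepsilon+6i\varepsilon)$, which is (\ref{66}).

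An even shorter route is to note that Cassini's identity is the special case $m=n-1$ of the d'Ocagne identity (\ref{60}): there the left-hand side becomes $Q_{P_{n-1}}Q_{P_{n+1}}-Q_{P_n}^{2}$, while the right-hand side becomes $(-1)^{n}\,2\,P_{-1}(1+i+6\varepsilon+6i\varepsilon)$, and $P_{-1}=1$ by the last identity in (\ref{3}). I expect the only delicate point in either approach to be the bookkeeping in the dual-complex product — keeping track of the sign changes produced by $i^{2}=-1$ and the systematic annihilation of every $\varepsilon^{2}$-term — after which the computation reduces to the same symmetric-function evaluation used for the ordinary Pell Cassini formula (\ref{2}).
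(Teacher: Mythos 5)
Your proposal is correct, but it follows a genuinely different route from the paper. The paper proves (\ref{66}) by brute force: it expands $Q_{P_{n-1}}Q_{P_{n+1}}-Q_{P_n}^2$ component by component from the definition (\ref{44}) and the multiplication rule (\ref{49}), then collapses each bracket with the Pell identity $P_mP_{n+1}-P_{m+1}P_n=(-1)^nP_{m-n}$. You instead exploit Binet's formula (\ref{65}) together with commutativity of the dual-complex algebra: the pure $\hat\alpha^2,\hat\beta^2$ terms cancel, the mixed terms give $-\hat\alpha\hat\beta(\alpha\beta)^{n-1}=(-1)^n\hat\alpha\hat\beta$, and your evaluation of the constant is right --- the components of $\hat\alpha\hat\beta$ are $1-\alpha\beta=2$, $\alpha+\beta=2$, $(\alpha^2+\beta^2)(1-\alpha\beta)=12$, and $\alpha^3+\beta^3+\alpha\beta(\alpha+\beta)=12$, so $\hat\alpha\hat\beta=2(1+i+6\varepsilon+6i\varepsilon)$, matching (\ref{66}) (I checked the case $n=1$ numerically and it agrees). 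What your approach buys is conceptual economy: the single constant $\hat\alpha\hat\beta$ explains at once why the same factor $2(1+i+6\varepsilon+6i\varepsilon)$ appears in Cassini, Catalan (\ref{67}) and d'Ocagne (\ref{60}), whereas the paper recomputes it each time; the cost is that you lean on Theorem 3.5, whose proof in the paper is only a reference to the Pell-quaternion case. Your second route, specializing d'Ocagne at $m=n-1$, is also legitimate: $m=n-1\ge 0$ is within the stated hypotheses for $n\ge1$, and $P_{-1}=1$ follows from $P_{-n}=(-1)^{n+1}P_n$ in (\ref{3}); just note explicitly that the Pell identity underlying (\ref{60}) must be read with this negative-index extension, since $P_{m-n}$ there becomes $P_{-1}$.
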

\begin{proof}
(\ref{66}): By using (\ref{44}) we get
\begin{equation*}
{\begin{array}{rl}
{\,{{Q}_P}_{n-1}}\,{\,{{Q}_P}_{n+1}}\,-(\,{{Q}_P}_{n})^2=&
\,({P}_{n-1}{P}_{n+1}-{P}_{n}^2)+({P}_{n+1}^2-{P}_{n}{P}_{n+2}) \\
&-i\,({P}_{n+1}{P}_{n}-{P}_{n+2}{P}_{n-1}) \\
&+\varepsilon\,[-({P}_{n+2}{P}_{n}-{P}_{n+3}{P}_{n-1}) \\
&-({P}_{n}{P}_{n+2}-{P}_{n+1}{P}_{n+1}) \\
&+({P}_{n+1}{P}_{n+3}-{P}_{n+2}{P}_{n+2})\\
&+({P}_{n+3}{P}_{n+1}-{P}_{n+4}{P}_{n})] \\
&+i\,\varepsilon\,[-{P}_{n}{P}_{n+3}-{P}_{n+1}{P}_{n+2})\\
&-({P}_{n+3}{P}_{n}-{P}_{n+4}{P}_{n-1})\\
&-({P}_{n+2}{P}_{n+1}-{P}_{n+3}{P}_{n})\\
&-({P}_{n+1}{P}_{n+2}-{P}_{n+2}{P}_{n+1})] \\
=&(-1)^{n}\,2\,(0+0i+6j+3\,ij) \\
=&(-1)^{n}\,2\,(1+i\,+6\varepsilon\,+6\,i\,\varepsilon\,). 
\end{array}}
\end{equation*} 
where the identities of the Pell numbers ${P}_{m}{P}_{n+1}-{P}_{m+1}{P}_{n}=(-1)^{n}{P}_{m-n}$\, is used \,\cite{I}.  
\end{proof}
\begin{thm} \textbf{Catalan's Identity} 
Let ${Q_P}_{n}$ be the dual-complex Pell quaternion. For $n\ge 1$, Catalan's identity for ${Q_P}_{n}$ is as follows: 
\begin{equation}\label{67}
{Q_P}_{n}^2-{Q_P}_{n+r}\,{Q_P}_{n-r}=(-1)^{n-r}\,{P}_{r}^2\,\,2\,(1+i\,+6\,\varepsilon\,+6\,i\,\varepsilon\,).
\end{equation}
\end{thm}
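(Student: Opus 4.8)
The plan is to deduce Catalan's identity directly from Binet's formula (\ref{65}), exploiting the fact that the dual-complex Pell quaternions form a \emph{commutative} ring. First I would record the two elementary facts about the roots of $x^{2}=2x+1$, namely $\alpha+\beta=2$, $\alpha\beta=-1$, so that $(\alpha\beta)^{n-r}=(-1)^{n-r}$, and that $\dfrac{(\alpha^{r}-\beta^{r})^{2}}{(\alpha-\beta)^{2}}=P_{r}^{2}$ (the square of Binet's formula for $P_{r}$). I would also note the factorizations $\hat\alpha=(1+i\alpha)(1+\varepsilon\alpha^{2})$ and $\hat\beta=(1+i\beta)(1+\varepsilon\beta^{2})$, which are legitimate in $\mathbb{DC}^{P_n}$ because the base elements $\{1,i,\varepsilon,i\varepsilon\}$ multiply commutatively.

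Next, using (\ref{65}) and (\ref{49}), I would expand
\[
{Q_P}_{n}^{2}=\frac{1}{(\alpha-\beta)^{2}}\bigl(\hat\alpha^{2}\alpha^{2n}-2\,\hat\alpha\hat\beta\,(\alpha\beta)^{n}+\hat\beta^{2}\beta^{2n}\bigr),
\]
\[
{Q_P}_{n+r}\,{Q_P}_{n-r}=\frac{1}{(\alpha-\beta)^{2}}\bigl(\hat\alpha^{2}\alpha^{2n}-\hat\alpha\hat\beta\,(\alpha^{n+r}\beta^{n-r}+\alpha^{n-r}\beta^{n+r})+\hat\beta^{2}\beta^{2n}\bigr),
\]
where commutativity collapses the two mixed terms $\hat\alpha\hat\beta$ and $\hat\beta\hat\alpha$ into one in each line. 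Subtracting, the pure powers cancel and I am left with
\[
{Q_P}_{n}^{2}-{Q_P}_{n+r}\,{Q_P}_{n-r}=\frac{\hat\alpha\hat\beta}{(\alpha-\beta)^{2}}\bigl(\alpha^{n+r}\beta^{n-r}+\alpha^{n-r}\beta^{n+r}-2(\alpha\beta)^{n}\bigr).
\]
Pulling out $(\alpha\beta)^{n-r}$ turns the bracket into $(\alpha\beta)^{n-r}(\alpha^{r}-\beta^{r})^{2}$, and the facts recorded above reduce the right-hand side to $(-1)^{n-r}P_{r}^{2}\,\hat\alpha\hat\beta$.

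It then remains only to evaluate the dual-complex constant $\hat\alpha\hat\beta$. Using the factorizations above, $\hat\alpha\hat\beta=(1+i\alpha)(1+i\beta)\cdot(1+\varepsilon\alpha^{2})(1+\varepsilon\beta^{2})$; with $i^{2}=-1$, $\varepsilon^{2}=0$, $\alpha+\beta=2$, $\alpha\beta=-1$ and $\alpha^{2}+\beta^{2}=6$ the first factor is $2+2i$ and the second is $1+6\varepsilon$, so $\hat\alpha\hat\beta=(2+2i)(1+6\varepsilon)=2(1+i+6\varepsilon+6\,i\,\varepsilon)$. Substituting yields (\ref{67}).

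The step I expect to require the most care — and hence the main obstacle — is the bookkeeping of the $\varepsilon$ and $i\varepsilon$ parts in $\hat\alpha\hat\beta$: one must check that the factorization of $\hat\alpha$ is valid in the dual-complex ring (so that $\varepsilon^{2}=0$ and $(i\varepsilon)^{2}=0$ remove the extraneous terms) and that commutativity is genuinely available, since the cancellation of the pure-power terms in the subtraction step fails without it. An alternative, purely computational route — paralleling the proof of Cassini's identity (\ref{66}) — is to expand ${Q_P}_{n}^{2}$ and ${Q_P}_{n+r}\,{Q_P}_{n-r}$ componentwise through (\ref{44}) and (\ref{49}) and apply the Pell Catalan identity $P_{m}^{2}-P_{m+r}P_{m-r}=(-1)^{m-r}P_{r}^{2}$ to each of the sixteen resulting products; this is longer but avoids any appeal to Binet's formula.
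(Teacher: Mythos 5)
Your proof is correct, but it follows a genuinely different route from the paper. The paper proves Catalan's identity exactly by the ``alternative'' you mention at the end: it expands ${Q_P}_{n}^2-{Q_P}_{n+r}\,{Q_P}_{n-r}$ componentwise through (\ref{44}) and (\ref{49}) and applies the Pell identities ${P}_{m}{P}_{n}-{P}_{m+r}{P}_{n-r}=(-1)^{n-r}{P}_{m+r-n}P_{r}$ and $P_{n}^2-P_{n-r}P_{n+r}=(-1)^{n-r}P_{r}^2$ to the sixteen products, collecting the four components into the constant $2\,(1+i+6\varepsilon+6\,i\,\varepsilon)$. Your primary argument instead derives the identity from Binet's formula (\ref{65}) together with the commutativity of the dual-complex ring, and every step checks out: the factorizations $\hat\alpha=(1+i\alpha)(1+\varepsilon\alpha^{2})$, $\hat\beta=(1+i\beta)(1+\varepsilon\beta^{2})$ are valid, the cancellation of the $\hat\alpha^{2}\alpha^{2n}$ and $\hat\beta^{2}\beta^{2n}$ terms is legitimate precisely because the algebra is commutative, the bracket reduces to $(\alpha\beta)^{n-r}(\alpha^{r}-\beta^{r})^{2}$ with $\alpha\beta=-1$, and $\hat\alpha\hat\beta=(2+2i)(1+6\varepsilon)=2(1+i+6\varepsilon+6\,i\,\varepsilon)$ using $\alpha+\beta=2$, $\alpha^{2}+\beta^{2}=6$, $\varepsilon^{2}=0$. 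What your approach buys is conceptual economy: the mysterious constant in (\ref{67}) is identified intrinsically as $\hat\alpha\hat\beta$, the same computation delivers Cassini's identity (\ref{66}) as the case $r=1$ and, with minor changes, the d'Ocagne identity (\ref{60}), and the sixteen-term bookkeeping disappears. The only caveat is that you lean on Theorem (\ref{65}), which the paper justifies only by asserting it is ``the same as'' Binet's formula for Pell quaternions; for a fully self-contained argument you would want to verify (\ref{65}) directly (a one-line check from $P_{n}=(\alpha^{n}-\beta^{n})/(\alpha-\beta)$ applied to each of the four components), after which your derivation stands on its own.
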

\begin{proof} 
(\ref{67}): By using (\ref{44}) we get 
\begin{equation*}
\begin{array}{rl}
{Q_P}_{n}^2-{Q_P}_{n+r}\,{Q_P}_{n-r}=&[\,({P}_{n}^2-{P}_{n+r}\,{P}_{n-r})-({P}_{n+1}^2-{P}_{n+r+1}\,{P}_{n-r+1})\,] \\ 
&+i\,[\,({P}_{n}\,{P}_{n+1}-{P}_{n+r}\,{P}_{n-r+1}) \\&\quad \quad+({P}_{n+1}\,{P}_{n}-{P}_{n+r+1}\,{P}_{n-r})\,]\\ &+\varepsilon\,[\,({P}_{n+2}\,{P}_{n}-{P}_{n+r+2}\,{P}_{n-r})\\&\quad \quad+({P}_{n}\,{P}_{n+2}-{P}_{n+r}\,{P}_{n-r+2}) \\ &\quad \quad-({P}_{n+1}\,{P}_{n+3}-{P}_{n+r+1}\,{P}_{n-r+3}) \\ &\quad \quad-({P}_{n+3}\,{P}_{n+1}-{P}_{n+r+3}\,{P}_{n-r+1})\,]\\
 &+i\,\varepsilon\,[\,({P}_{n}\,{P}_{n+3}-{P}_{n+r}\,{P}_{n-r+3})\\&\quad \quad+({P}_{n+3}\,{P}_{n}-{P}_{n+r+3}\,{P}_{n-r}) \\ &\quad \quad+({P}_{n+1}\,{P}_{n+2}-{P}_{n+r+1}\,{P}_{n-r+2}) \\ &\quad \quad+({P}_{n+2}\,{P}_{n+1}-{P}_{n+r+2}\,{P}_{n-r+1})\,] \\
&=(-1)^{n-r})\,2\,{P}_{r}^2\,(1+i\,+6\,\varepsilon\,+6\,i\,\varepsilon\,) 
\end{array}
\end{equation*}
where the identities of the Pell numbers ${P}_{m}{P}_{n}-{P}_{m+r}{P}_{n-r}=(-1)^{n-r}{P}_{m+r-n}\,P_{r}$\, and \,$P_{n}\,P_{n}-P_{n-r}\,P_{n+r}=(-1)^{n-r}\,P_{r}^2$ are used\,\cite{I}. 
\end{proof}

\section{Conclusion} 
In this study, a number of new results on dual-complex Pell quaternions were derived. Quaternions have great importance as they are used in quantum physics, applied mathematics, quantum mechanics, Lie groups, kinematics and differential equations. 

This study fills the gap in the literature by providing the dual-complex Pell quaternion using definitions of the dual-complex number, dual-complex Pell number and Pell quaternion \cite{L}.

\end{document}